\title[Overconvergent unit-root $F$-isocrystals and isotriviality]
{Overconvergent unit-root $F$-isocrystals and isotriviality}
\author{Teruhisa Koshikawa}
\address{Department of Mathematics, University of Chicago}
\email{teruhisa@uchicago.edu}
\newcommand\bF{\mathbf F}
\newcommand\bQ{\mathbf Q}
\newcommand\bZ{\mathbf Z}
\theoremstyle{plain}
\newtheorem{thm}{Theorem}[section]
\newtheorem{lem}[thm]{Lemma}
\newtheorem{prop}[thm]{Proposition}
\theoremstyle{definition}
\newtheorem{rem}[thm]{Remark}
\newtheorem{defn}[thm]{Definition}
\newtheorem{exam}[thm]{Example}
\begin{document}

\begin{abstract}
We show that a semisimple overconvergent ``absolutely unit-root" $F$-isocrystal on a geometrically connected smooth variety over a finite field becomes constant over a finite covering. 
\end{abstract}

\maketitle

\section{Introduction}
We will study overconvergent unit-root $F$-isocrystals on a smooth variety over a finite field. 
We first state a result for projective smooth curves, using $p$-adic representations of the \'etale fundamental group. 

Let $X$ be a geometrically connected projective smooth curve over a finite field $\bF_q$ of characteristic $p$. 
We will write $\overline{\bF}_q$ for an algebraic closure of $\bF_q$, and $G_{\bF_q}$ for the absolute Galois group of $\bF_q$.  
Then, we have the following exact sequence of fundamental groups:
\[
1\to \pi_1(X_{\overline{\bF}_q})\to \pi_1(X)\to G_{\bF_q}\to 1. 
\]
It is a well-known consequence of class field theory that the image of the geometric fundamental group $\pi_1(X_{\overline{\bF}_q})$ in the abelianization $\pi_1(X)^{\textnormal{ab}}$ is finite. 

We consider a $p$-adic non-abelian version by using the Langlands program (\cite{Lafforgue} and~\cite{Abe}). For this, we need to introduce a condition on Frobenius eigenvalues. Let $\rho$ be a continuous finite dimensional $\overline{\bQ}_p$-representation of $\pi_1(X)$. For any closed point $x$ of $X$, we shall consider Forbenius eigenvalues of geometric Frobenius at $x$. If $\rho$ is irreducible with finite determinant, $\rho$ is absolutely unit-root if the following holds; for any Frobenius eigenvalue $\lambda$, which is known to be an algebraic number, and any ring homomorphism $\iota\colon\overline{\bQ}_p\to\overline{\bQ}_p$, $\iota(\lambda)$ is a $p$-adic unit. We say that $\rho$ is absolutely unit-root if any irreducible constitute $\rho'$ of $\rho$ has an absolutely unit-root twist $\rho'\otimes\chi$ for some character $\chi$. 

\begin{prop}\label{fund}
Let $X$ be a geometrically connected projective smooth curve over $\bF_q$ and $\rho$ a continuous finite dimensional $\overline{\bQ}_p$-representation of $\pi_1(X)$. 
Suppose $\rho$ is absolutely unit-root. 

If $\rho$ is either semisimple or any irreducible constitute of $\rho$ has the determinant of finite order, then the image of $\pi_1(X_{\overline{\bF}_q})$ under $\rho$ is finite. 
In other words, $\rho$ factors through $G_{\bF_q}$ after replacing $X$ and $\bF_q$ by finite \'etale coverings. 

If $\rho$ is semisimple and any irreducible constitute of $\rho\otimes\overline{\bQ}_p$ has the determinant of finite order, the whole action of $\pi_1(X)$ factors through a finite quotient. 
\end{prop}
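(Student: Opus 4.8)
The plan is to prove the statement in the equivalent form that the image $\rho(\pi_1(X))$ is finite. First I would invoke the first part of the proposition, which gives that $\rho(\pi_1(X_{\overline{\bF}_q}))$ is finite; thus after passing to a connected finite Galois cover $X'$ over a finite extension $\bF_{q'}/\bF_q$ I may assume $\rho|_{\pi_1(X')}$ is trivial on $\pi_1(X'_{\overline{\bF}_{q'}})$, so that it factors through $G_{\bF_{q'}}\cong\widehat{\bZ}$. Since $\pi_1(X')$ has finite index in $\pi_1(X)$, it suffices to show $\rho(\pi_1(X'))$ is finite. As $\rho$ is semisimple, Clifford's theorem makes $\rho|_{\pi_1(X')}$ semisimple; a semisimple representation of the abelian group $\widehat{\bZ}$ over $\overline{\bQ}_p$ is a sum of characters, so $F':=\rho(\mathrm{Frob}_{q'})$ is a semisimple (diagonalizable) matrix and $\rho(\pi_1(X'))=\overline{\langle F'\rangle}$. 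Hence the whole problem reduces to showing that every eigenvalue $\mu$ of $F'$ is a root of unity, for then $F'$ has finite order and the image is finite.

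Each such $\mu$ satisfies, for any closed point of degree $d$, that $\mu^{d}$ is a Frobenius eigenvalue of $\rho|_{X'}$; since integrality, being a $p$-adic unit, and having archimedean absolute value $1$ all descend from $\mu^{d}$ to $\mu$, it is enough to control Frobenius eigenvalues place by place and then apply Kronecker's theorem (an algebraic integer all of whose archimedean absolute values equal $1$ is a root of unity). At the archimedean places I would use that, by hypothesis, every irreducible constituent of $\rho$ has finite-order determinant: by the purity theorem for irreducible objects of finite determinant (\cite{Lafforgue},~\cite{Abe}) such a $\rho$ is pure of weight $0$, and weight $0$ is preserved under pullback to $X'$, whence $|\iota(\mu)|=1$ for every archimedean embedding $\iota$. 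At the places above $p$ the absolutely unit-root hypothesis applies: finiteness of the determinant forces the auxiliary twisting character to have finite order, hence to be harmless, so that $\iota(\mu)$ is a $p$-adic unit for every $\iota\colon\overline{\bQ}_p\to\overline{\bQ}_p$, i.e. $\mu$ is a unit at every place over $p$. At the remaining finite places I would use companions: for each $\ell\neq p$ every constituent admits a lisse $\overline{\bQ}_\ell$-companion (\cite{Lafforgue},~\cite{Abe}), whose Frobenius lies in $\mathrm{GL}_n(\overline{\bZ}_\ell)$, so the characteristic polynomial of Frobenius is $\ell$-integral and $\mu$ is integral at every place over every $\ell\neq p$.

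Combining these, $\mu$ is integral at all finite places, hence an algebraic integer, and all its archimedean absolute values are $1$; Kronecker's theorem then gives that $\mu$ is a root of unity, so $F'$ has finite order and $\rho(\pi_1(X'))$, hence $\rho(\pi_1(X))$, is finite. The main obstacle is the archimedean estimate: it is precisely here that the finite-determinant hypothesis (stronger than what the first part required) is indispensable, and it rests on the deep purity input furnished by the Langlands correspondence, with the parallel $\ell$-adic integrality again supplied by the companions; the real work is arranging that all places are controlled simultaneously so that Kronecker applies.
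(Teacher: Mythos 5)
Your proposal only addresses the last sentence of the proposition: you explicitly take the first claim (finiteness of $\rho(\pi_1(X_{\overline{\bF}_q}))$) as an input, and that claim is where essentially all of the content lies. The paper's proof of it occupies the whole of Section~\ref{proof of fund}: after reducing to an irreducible $\rho$ with finite determinant (via a twisting lemma of Abe, and via Lemma~\ref{extension} for the non-semisimple case allowed in the hypothesis), one invokes Abe's $p$-adic Langlands correspondence to see that all characteristic polynomials of Frobenius have coefficients in a single number field $E$ (Theorem~\ref{finite}), shows that every Frobenius eigenvalue is a root of unity (Proposition~\ref{root of unity}), concludes that only finitely many characteristic polynomials of Frobenius occur, propagates this to all elements of $\pi_1(X)$ by Chebotarev and continuity, finds an open normal subgroup acting unipotently, and kills it by Clifford semisimplicity. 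None of this appears in your write-up, so as a proof of the proposition as stated it is incomplete; in particular the non-semisimple case (handled by Lemma~\ref{extension} using class field theory and $H^1(\pi_1(X),\overline{\bQ}_p)$) and the reduction of a general irreducible absolutely unit-root $\rho$ to one with finite determinant by twisting are entirely missing.

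For the part you do prove, the argument is essentially sound, and your place-by-place analysis of the eigenvalues (archimedean absolute value $1$ from purity via Lafforgue--Abe, $\ell$-integrality from companions, $p$-adic unit from the absolutely unit-root hypothesis, then Kronecker) coincides with the paper's Proposition~\ref{root of unity}. But once one has that proposition together with Theorem~\ref{finite}, the paper obtains finiteness of the \emph{full} image $\rho(\pi_1(X))$ directly, so the detour through $G_{\bF_{q'}}\cong\widehat{\bZ}$ and the diagonalizability of $\rho(\mathrm{Frob}_{q'})$ is unnecessary: the final claim of the proposition falls out of the same argument rather than requiring the first claim as a separately established input. The genuine gap is that you have inverted the logical order without supplying the foundation on which everything rests.
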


This is an analogue of the unramified Fontaine-Mazur conjecture in the number field case (\cite{Fontaine-Mazur}*{Conjecture 5a} and~\cite{Kisin-Wortmann}). Some assumption for a non-semisimple $\rho$ is necessary, see Example~\ref{counterexample}. 

Proposition~\ref{fund} can be generalized to smooth varieties. 
We prefer to state such a generalization in terms of overconvergent $F$-isocrystals as follows. 

We let $X$ be a smooth variety over $\bF_q$. 
As we recall in Section~\ref{pre}, a $p$-adic representation of $\pi_1(X)$ corresponds to a unit-root (convergent) $F$-isocrystal on $X$. 
For a non-proper $X$, one should put a condition at infinity to generalize Proposition~\ref{fund}, and a suitable notion is an overconvergent unit-root $F$-isocrystal on $X$. 
Let $F\textnormal{-Overconv}(X/{\overline{\bQ}_p})^{\circ}$ denote the category of overconvergent unit-root $F$-isocrystals on $X$ with coefficients in $\overline{\bQ}_p$. 

We define a notion of overconvergent absolutely unit-root $F$-isocrystal in the same way as absolutely unit-root representations. See Definition~\ref{abs} for details. If $M$ is the $p$-adic realization of a ``pure motive" over $X$ with coefficients in $\bQ$, unit-root would imply absolutely unit-root but this is not the case in general, see Example~\ref{Drinfeld}. 

We say that an $F$-isocrystal is constant if it is the pullback of an $F$-isocrystal on a finite field. 
Then, our goal in this paper is to prove the following theorem: 

\begin{thm}\label{isotrivial}
Let $X$ be a geometrically connected smooth variety over $\bF_q$ and $M$ an object of $F\textnormal{-Overconv}(X/{\overline{\bQ}_p})^{\circ}$. 
Suppose $M$ is absolutely unit-root. 

If $M$ is either semisimple as an overconvergent $F$-isocrystal or any irreducible constitute of $M$ has the determinant of finite order, then $M$ is isotrivial, i.e., becomes constant over a finite \'etale covering of $X$. 

If $M$ is semisimple and any irreducible constitute of $M$ has the determinant of finite order, $M$ becomes trivial after a finite \'etale covering.
\end{thm}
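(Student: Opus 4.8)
The plan is to deduce Theorem~\ref{isotrivial} from the projective curve case, Proposition~\ref{fund}, by combining the dictionary between overconvergent unit-root $F$-isocrystals and $p$-adic representations with a Lefschetz-type reduction to curves. First I would translate everything into representation theory. By the equivalences recalled in Section~\ref{pre} (Crew and Katz for the unit-root/representation correspondence, Tsuzuki for the overconvergent refinement), an object $M$ of $F\textnormal{-Overconv}(X/{\overline{\bQ}_p})^{\circ}$ corresponds to a continuous finite-dimensional $\overline{\bQ}_p$-representation $\rho_M$ of $\pi_1(X)$ whose local monodromy along any boundary divisor (for some, hence any, normal compactification) is finite; conversely such representations come from overconvergent objects. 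Under this dictionary Frobenius eigenvalues at closed points, determinants, direct sums, subquotients, and semisimplicity all match, so $M$ absolutely unit-root translates into $\rho_M$ absolutely unit-root, and the two hypothesis branches (semisimple, resp.\ irreducible constituents with finite-order determinant) are preserved. Moreover $M$ is constant precisely when $\rho_M$ factors through $G_{\bF_q}$, so isotriviality of $M$ is equivalent to finiteness of the geometric monodromy $\rho_M(\pi_1(X_{\overline{\bF}_q}))$. It therefore suffices to prove the two finiteness statements for $\rho_M$.

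Next I would reduce the dimension of $X$ to one. Choosing a smooth curve $C\subset X$ as a general complete intersection of hyperplane sections, the Lefschetz theorem for fundamental groups gives a surjection $\pi_1(C_{\overline{\bF}_q})\to\pi_1(X_{\overline{\bF}_q})$, so finiteness of the geometric monodromy of $\rho_M|_C$ forces the same for $\rho_M$; arranging $C$ to meet the boundary transversally keeps the restriction $M|_C$ overconvergent and absolutely unit-root. Embedding $C$ in its smooth compactification $\overline{C}$, the finite-local-monodromy property of $M|_C$ means that after a connected finite \'etale cover $\overline{C}'\to\overline{C}$, pulled back to $C'=\overline{C}'\times_{\overline{C}}C$, the restricted representation extends to a representation of $\pi_1(\overline{C}')$. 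This places me exactly in the situation of Proposition~\ref{fund}, whose conclusions---finiteness of $\rho_M(\pi_1(\overline{C}'_{\overline{\bF}_q}))$, and in the stronger semisimple case factorization of the whole action through a finite quotient---descend back along $C'\to C$ and then, by Lefschetz surjectivity, to $X$. Finiteness of $\rho_M(\pi_1(X_{\overline{\bF}_q}))$ is precisely isotriviality of $M$, and the stronger conclusion yields triviality after a finite \'etale cover.

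The main obstacle I expect is making this reduction compatible with the hypotheses, in particular preserving semisimplicity under restriction to $C$, since a semisimple representation need not restrict to a semisimple one on a subgroup. Here I would use that a semisimple overconvergent $F$-isocrystal is \emph{pure} by the Langlands correspondence of Lafforgue and Abe (\cite{Lafforgue},~\cite{Abe}), that purity is preserved under restriction to a general $C$, and that a pure overconvergent $F$-isocrystal is (geometrically) semisimple; this keeps $M|_C$ inside the semisimple branch. For the finite-determinant branch the matching is immediate, as determinants restrict to determinants and finite order is preserved.

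A secondary point to verify is that Lefschetz surjectivity and the transversality needed for overconvergence can be realized simultaneously by a single general $C$, and that the passage through $\overline{C}'$ and the descent of finiteness along the finite covers $C'\to C$ and $C\hookrightarrow X$ respect the hypotheses throughout; this is where most of the routine bookkeeping will lie, but it should present no essential difficulty once the purity input above is in place.
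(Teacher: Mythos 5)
Your overall strategy---translate $M$ into a potentially unramified $p$-adic representation, slice by a curve using a Lefschetz theorem for $\pi_1$, and invoke Proposition~\ref{fund}---is the same as the paper's, but the order in which you compactify creates a genuine gap. You choose a general complete intersection curve $C$ inside the possibly non-proper $X$ and assert that the Lefschetz theorem gives a surjection $\pi_1(C_{\overline{\bF}_q})\to\pi_1(X_{\overline{\bF}_q})$. The classical Lefschetz theorem for $\pi_1$ (the one invoked in the paper, \cite{SGA1}) requires the ambient variety to be smooth and \emph{projective}; for an open $X$ the fundamental group sees arbitrarily wild ramification along the boundary, and surjectivity for a single general hyperplane-section curve does not follow: such a $C$ would have to stay connected in every finite \'etale cover of $X$ simultaneously, including covers whose wildness at infinity is unbounded, and no fixed degree of $C$ controls all of them. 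The paper's proof is structured precisely to sidestep this: it first shrinks $X$, uses de Jong's alterations to produce a finite \'etale cover $Y\to X$ with a smooth projective compactification $\overline{Y}$, uses the Tsuzuki--Kedlaya potential unramifiedness of $\rho$ to extend it to a representation of $\pi_1(\overline{Y})$, and only \emph{then} slices $\overline{Y}$ by a curve. You compactify only the curve, after slicing, which is too late.

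Two further points. First, your purity detour for preserving semisimplicity is both unnecessary and incorrect: once one has a \emph{surjection} $\pi_1(C)\to\pi_1(\overline{Y})$, pulling back a representation along it tautologically preserves semisimplicity and irreducibility (the invariant subspaces are literally the same), so there is nothing to repair; and the claim that a semisimple overconvergent unit-root $F$-isocrystal is pure is false in general (an irreducible object is only a \emph{twist} of a pure one by a character whose Frobenius eigenvalues can be arbitrary, and a direct sum of constant objects of distinct weights is semisimple but not pure). Second, the branch in which $M$ is a non-split extension whose constituents have finite-order determinant, and the upgrade in the final sentence of the theorem from finite geometric monodromy to finiteness of the whole image of $\pi_1(X)$, are not mere bookkeeping: the paper needs the Katz--Lang finiteness of $\ker\bigl(\pi_1(\overline{Y})^{\textnormal{ab}}\to G_{\bF_q}^{\textnormal{ab}}\bigr)$ to control the extension classes, and a Clifford-theory argument to show that the characters occurring in the restriction of an irreducible constituent to a finite Galois cover are all equal, hence of finite order because their product is. Your proposal does not engage with either step.
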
 

An $\ell$-adic version of the theorem is discussed in Remark~\ref{l-adic} below. 
Another related $\ell$-adic statement is the de Jong conjecture~\cite{deJong}. 

If $X$ is projective, (a $p$-adic representation version of) Theorem~\ref{isotrivial} follows from Proposition~\ref{fund} by considering hyperplane sections. 
The general case can be reduced to the projective case as a $p$-adic representation corresponding to $M$ is known to be potentially unramified, see Section~\ref{pre}. 

After a preparation in Section~\ref{pre}, we prove Prposition~\ref{fund} in Section~\ref{proof of fund}, and Theorem~\ref{isotrivial} in Section~\ref{proof of isotrivial}. 

\subsection*{Acknowledgments}
I am grateful to Keerthi Madapusi Pera for helpful discussions. 
I would like to thank Kazuya Kato for discussions, suggestions and comments. 
Vladimir Drinfeld kindly pointed out a gap in an earlier version and suggested Example~\ref{Drinfeld}, and I would like to thank him.  
I also thank Koji Shimizu and H\'el\`ene Esnault for their comments. 

\subsection*{Notation}
Throughout this note, $\pi_1$ denotes \'etale fundamental groups. 
We do not specify fixed geometric points.  

\section{Preliminaries}\label{pre}
We will review relations between $F$-isocrystals and $p$-adic representations of fundamental groups. 
In particular, we explain why Theorem~\ref{isotrivial} generalizes Proposition~\ref{fund}. 

Let $X$ be a smooth variety over $\bF_q$. 
We first review a unit-root $F$-crystal $M$ on $X$ relative to $W$. It is a sheaf on the crystalline site $(X/W)_{\textnormal{crys}}$ whose values are locally free of finite rank, and it is equipped with an isomorphism $\varphi^* M\overset{\cong}{\longrightarrow}M$. Here $\varphi$ denotes a power of the absolute Frobenius of $X$ defined by the $q$-th power. 
We write $F\textnormal{-Crys}(X/W)^{\circ}$ for the category of unit-root $F$-crystals. 

Recall also that there is a notion of unit-root convergent $F$-isocrystal (relative to $K=W(\bF_q)[\frac{1}{p}]$) as discussed in~\cite{Crew}*{Section 1}. It can be regarded as a sheaf $M$ on the category of enlargements whose values are locally free of finite rank, and equipped with an isomorphism $\varphi^* M\overset{\cong}{\longrightarrow}M$, which is fiberwise unit-root as an $F$-isocrystal on a point. 
We write $F\textnormal{-Conv}(X/K)^{\circ}$ for the category of unit-root convergent $F$-isocrystals. 
By~\cite{Crew}*{2.1. Theorem and 2.2. Theorem}, there is an equivalence of categories
\[
F\textnormal{-Crys}(X/W)^{\circ}\otimes_{\bZ}\bQ \overset{\cong}{\longrightarrow}
F\textnormal{-Conv}(X/K)^{\circ}. 
\]

Further recall a notion of unit-root overconvergent(``=overconvergent unit-root" as below) $F$-isocrystals (relative to $K$) as defined in~\cite{Crew}*{Section 1}, for instance. We will not recall the definition, but we mention that there is a natural (sort of forgetful) functor
\[
F\textnormal{-Overconv}(X/K)^{\circ}\longrightarrow
F\textnormal{-Conv}(X/K)^{\circ}, 
\]
where $F\textnormal{-Overconv}(X/K)^{\circ}$ denotes the category of unit-root overconvergent $F$-isocrystals. 
This functor is known to be fully faithful\footnote{This is the precise meaning of ``overconvergent unit-root=unit-root overconvergent".} (Tsuzuki~\cite{Tsuzuki:hom}*{Corollary 1.2.3} and~\cite{Kedlaya:semistableII}*{Theorem 4.2.1}). 
Moreover, this is an equivalence if $X$ is proper. 

These categories of $F$-isocrystals we defined are $K$-linear categories, and we may extend the scalars to a fixed algebraic closure $\overline{\bQ}_p$ of $\bQ_p$, cf.~\cite{Abe}. The category of unit-root overconvergent (resp. convergent) $F$-isocrystals with $\overline{\bQ}_p$-structure will be denoted by $F\textnormal{-Overconv}(X/\overline{\bQ}_p)^{\circ}$ (resp. $F\textnormal{-Conv}(X/\overline{\bQ}_p)^{\circ}$). 

We shall explain the relations to representations of $\pi_1(X)$. Let $\textnormal{Rep}_{W(\bF_q)}(\pi_1(X))$ denote the category of continuous finite free $W(\bF_q)$-representations of $\pi_1(X)$ and $\textnormal{Rep}_{\overline{\bQ}_p}(\pi_1(X))$ the category of continuous finite dimensional $\overline{\bQ}_p$-representations of $\pi_1(X)$. 
There are two compatible equivalences of categories
\[
F\textnormal{-Crys}(X/W)^{\circ}\overset{\cong}{\longrightarrow}\textnormal{Rep}_{W(\bF_q)}(\pi_1(X)), 
F\textnormal{-Conv}(X/\overline{\bQ}_p)^{\circ}\overset{\cong}{\longrightarrow}\textnormal{Rep}_{\overline{\bQ}_p}(\pi_1(X)). 
\]
The former is \cite{Katz}*{Proposition 4.1.1}, and the latter is~\cite{Crew}*{2.2. Theorem}. 

Moreover, if $M$ is an object of $F\textnormal{-Overconv}(X/\overline{\bQ}_p)^{\circ}$, the corresponding $\overline{\bQ}_p$-representation $\rho$ is potentially unramified in the sense of \cite{Kedlaya:unit-root}*{Definition 2.3.5}. 
This important fact was proved by Tsuzuki~\cite{Tsuzuki:hom}*{Proposition 7.2.1}. 

In fact, any potentially unramified representation corresponds to a unit-root overconvergent $F$-isocrystal and it induces an equivalence of such categories as shown by Tsuzuki~\cite{Tsuzuki:curve}*{Theorem 7.2.3} in the curve case\footnote{Technically speaking, one should also use~\cite{Shiho}*{Proposition 4.2}.} and by Kedlaya~\cite{Kedlaya:unit-root}*{Theorem 2.3.7} in the general case. (See also~\cite{Abe:cycle}*{Proposition 1.4.3} with a general theory of six operations~\cite{Abe}.)

We would like to mention a result of Shiho~\cite{Shiho}*{Theorem 4.3}. He showed that a representation that has finite local monodromy along the generic points of  ``boundary divisors" is potentially unramified, see {\it ibid.} for the precise definition. We will not use this result in this article. 

We will introduce a condition on Frobenius eigenvalues. For any closed point $x\in X$, using $\rho$, we shall consider the eigenvalues and the characteristic polynomial of the geometric Frobenius at $x$. These Frobenius eigenvalues can be identified (up to normalization) with those of the overconvergent $F$-isocrystal $M$. 

\begin{defn}\label{abs}
Let $M$ be an object of $F\textnormal{-Overconv}(X/\overline{\bQ}_p)^{\circ}$. 
\begin{enumerate}
\item Suppose $M$ is irreducible with finite determinant. Any Frobenius eigenvalue is known to be an algebraic number by~\cite{Abe}. We say that $M$ is absolutely unit-root if $\iota(\lambda)$ is a $p$-adic unit for any Frobenius eigenvalue $\lambda$ and any ring homomorphism $\iota\colon\overline{\bQ}_p\to\overline{\bQ}_p$. 
\item Suppose $M$ is irreducible. It is isomorphic to a twist of irreducible $M'$ with finite determinant~\cite{Abe2011}*{6.1 Lemma.(ii)}. We say that $M$ is absolutely unit-root if $M'$ is absolutely unit-root. This is independent of twisting $M'$. 
\item We say that $M$ (or corresponding $\rho$) is absolutely unit-root if every irreducible constitute of $M$ is absolutely unit-root.  
\end{enumerate}
\end{defn}

Note that any Frobenius eigenvalue $\lambda$ is a $p$-adic unit for a unit-root $F$-isocrystal. The following example suggested by Drinfeld gives a unit-root $F$-isocrystal that is not absolutely unit-root. 

\begin{exam}\label{Drinfeld}
We consider Hilbert-Blumenthal modular varieties and their stratifications as in \cite{Goren}. 
Let $L$ be a totally real field of degree $d>1$ over $\bQ$ with the ring $\mathcal{O}_L$ of integers, and suppose $p$ splits completely in $L$ and is prime to the discriminant of $L$.  
For an integer $n\geq 3$ prime to $p$, let $\mathcal{M}^{n}_{L}$ be a fine moduli scheme over $\bF_p$ of Hilbert-Blumenthal abelian schemes with real multiplication by $\mathcal{O}_L$ and full level $n$ structure. (There is a condition on polarizations, but we will not recall it here.) It is smooth of dimension $d$. 

We denote by $\mathbb{D}$ the overconvergent $F$-isocrystal (or contravariant crystalline Dieudonn\'e module) of the universal abelian scheme over $\mathcal{M}^{n}_{L}$.  
Given a decomposition $\mathcal{O}_{L}\otimes\bZ_p=\prod_v \mathcal{O}_{L,v}$, $\mathbb{D}$ has a decomposition $\mathbb{D}=\bigoplus_v \mathbb{D}_v$ such that $\mathbb{D}_v$ has $\mathcal{O}_{L,v}$-structure of rank $2$. 

Fix a place $v_0$ of $L$ dividing $p$. Goren \cite{Goren}*{Section 1} defined a smooth divisor $Y$ of $\mathcal{M}^{n}_{L}$ such that, at the generic point of $Y$, $\mathbb{D}_{v_0}$ is supersingular and $\mathbb{D}_v$ is ordinary for every $v\neq v_0$. In particular, $\mathbb{D}_{v_0}$ is supersingular over $Y$. 

Some twist of $\mathbb{D}_{v_0}$ is unit-root over $Y$, but not absolutely unit-root. This is because $\mathbb{D}_v$ with $v\neq v_0$ is a crystalline companion of $\mathbb{D}_{v_0}$, i.e., they share Frobenius eigenvalues, and $\mathbb{D}_v$ is not supersingular over $Y$.  
\end{exam}

Finally, we shall explain the relation between Proposition~\ref{fund} and Theorem~\ref{isotrivial}. Suppose $M$ satisfies one of the assumptions in Theorem~\ref{isotrivial}. 
Since the category of potentially unramified representations is closed under subobjects and quotients, we see that $\rho$ satisfies a similar assumption for $M$. 
Therefore, Theorem~\ref{isotrivial} for $M$ is equivalent to the corresponding statement for $\rho$, hence generalizes Proposition~\ref{fund}. 

\section{The proof of Proposition~\ref{fund}}\label{proof of fund}

In this section, $X$ is a geometrically connected projective smooth curve over $\bF_q$. 

\begin{lem}\label{extension}
If Proposition~\ref{fund} holds for any semisimple absolutely unit-root $\rho$, it holds for any $\rho$ satisfying the assumption of Proposition~\ref{fund}. 
\end{lem}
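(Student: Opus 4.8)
The plan is to reduce to the semisimple case by passing to the semisimplification and then to kill the unipotent part of the geometric monodromy by a weight argument. The only genuinely non-semisimple situation to treat is the case where $\rho$ is not semisimple but every irreducible constituent of $\rho$ has finite determinant; when $\rho$ is itself semisimple there is nothing to reduce, and the second assertion of Proposition~\ref{fund} already assumes semisimplicity.

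First I would replace $\rho$ by its semisimplification $\rho^{\mathrm{ss}}$. Since $\rho$ and $\rho^{\mathrm{ss}}$ have the same irreducible constituents, $\rho^{\mathrm{ss}}$ is again absolutely unit-root, and if every irreducible constituent of $\rho$ has finite determinant then so does every constituent of $\rho^{\mathrm{ss}}$. Applying the semisimple case of Proposition~\ref{fund}, which we are allowed to assume, the image $\rho^{\mathrm{ss}}(\pi_1(X_{\overline{\bF}_q}))$ is finite. Because $\pi_1(X_{\overline{\bF}_q})$ is topologically finitely generated and normal in $\pi_1(X)$, after replacing $X$ and $\bF_q$ by finite \'etale coverings I may assume $\rho^{\mathrm{ss}}|_{\pi_1(X_{\overline{\bF}_q})}$ is trivial. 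Fixing a filtration of $\rho$ whose graded pieces $\mathrm{gr}_i$ realize the irreducible constituents, the group $V:=\rho(\pi_1(X_{\overline{\bF}_q}))$ then lands in the unipotent radical $U(\overline{\bQ}_p)$ of the Borel adapted to this filtration. For each $k\geq 1$, recording the leading term of $\rho(g)-1$ across $U_k/U_{k+1}$ (where $U_k$ is the lower central series of $U$) defines a $G_{\bF_q}$-equivariant homomorphism $\pi_1(X_{\overline{\bF}_q})^{\mathrm{ab}}\otimes\overline{\bQ}_p\to\bigoplus_i\mathrm{Hom}(\mathrm{gr}_i,\mathrm{gr}_{i+k})$ whose image is $VU_{k+1}/U_{k+1}$; it factors through $\pi_1(X_{\overline{\bF}_q})^{\mathrm{ab}}$ precisely because the target is abelian. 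So it suffices to show each such homomorphism vanishes, since then $V\subseteq U_{k+1}$ for all $k$ and $V=1$ by nilpotence of $U$.

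The hard part is exactly this vanishing, and it is where the Frobenius weights enter. The hypotheses force each irreducible constituent $\mathrm{gr}_i$ to be pure of weight $0$: by the results of Lafforgue and Abe each $\mathrm{gr}_i$ is pure of some weight $w_i$, and finiteness of the determinant makes $\det\mathrm{gr}_i(\mathrm{Frob})$ a root of unity, forcing $w_i=0$; being absolutely unit-root then makes every Frobenius eigenvalue an algebraic number all of whose archimedean absolute values equal $1$ and all of whose $p$-adic valuations vanish, hence a root of unity by Kronecker's theorem. Consequently each $\mathrm{Hom}(\mathrm{gr}_i,\mathrm{gr}_{i+k})$ is pure of weight $0$. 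On the other hand, the Galois action on $\pi_1(X_{\overline{\bF}_q})^{\mathrm{ab}}\otimes\overline{\bQ}_p$ is pure of weight $-1$, since it is dual to $H^1$ of the curve. A nonzero $G_{\bF_q}$-equivariant homomorphism from a weight $-1$ module to a weight $0$ module is impossible, so every homomorphism above vanishes. Hence $V=1$, the image $\rho(\pi_1(X_{\overline{\bF}_q}))$ is finite, and the conclusion of Proposition~\ref{fund} holds for $\rho$; the reformulation ``$\rho$ factors through $G_{\bF_q}$ after finite coverings'' is then immediate.

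An equivalent phrasing of the last step reads the obstruction as a Frobenius-invariant class in $H^1(X_{\overline{\bF}_q},\mathrm{Hom}(\mathrm{gr}_i,\mathrm{gr}_{i+k}))$; by Weil II this $H^1$ is pure of weight $1$ for weight-$0$ coefficients, so it carries no nonzero Frobenius-invariant vector, which gives the same vanishing. Either way, the main obstacle is this incompatibility of weights between the geometric abelianized monodromy, which is pure of weight $-1$, and the weight-$0$ extension data; by contrast the reduction to $\rho^{\mathrm{ss}}$ and the passage to a covering trivializing $\rho^{\mathrm{ss}}$ geometrically are formal.
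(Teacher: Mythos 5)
Your argument is correct, and its skeleton (pass to the semisimplification, trivialize it geometrically after a covering, then kill the resulting unipotent geometric image step by step along the filtration of the unipotent radical) matches the paper's; but the key vanishing step is carried out by a genuinely different mechanism. The paper first applies the last clause of Proposition~\ref{fund} to $\rho^{\mathrm{ss}}$ so that all constituents become \emph{trivial} (not just geometrically trivial), reduces by induction to a two-dimensional extension of the trivial character by itself, and then quotes class field theory: the image of $\pi_1(X_{\overline{\bF}_q})$ in $\pi_1(X)^{\textnormal{ab}}$ is finite, so every class in $H^1(\pi_1(X),\overline{\bQ}_p)=\textnormal{Hom}(\pi_1(X),\overline{\bQ}_p)$ kills $\pi_1(X_{\overline{\bF}_q})$. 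You instead keep the constituents as (geometrically trivial, weight-zero) $G_{\bF_q}$-modules and rule out a nonzero equivariant map from $\pi_1(X_{\overline{\bF}_q})^{\textnormal{ab}}\otimes\overline{\bQ}_p$ --- really the physical Tate module $T_pJ(\overline{\bF}_q)\otimes\overline{\bQ}_p$, pure of weight $\pm1$ --- into the weight-zero extension data. This buys a transparent explanation of why Example~\ref{counterexample} (where the coefficient module is itself $T_p$, of matching weight) is not a counterexample, and it avoids the intermediate reduction to literally trivial constituents; the cost is that purity of the $\mathrm{gr}_i$ already requires the Lafforgue--Abe input, so your ``reduction lemma'' is no longer the soft formal step it is in the paper (where only Katz--Lang/class field theory is used). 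Two small points to tighten: the filtration you should induct on is the standard (block-superdiagonal) filtration $U^{(k)}$ of the unipotent radical of the parabolic stabilizing the flag, not the lower central series of $U$ per se --- the point being that $U^{(k)}/U^{(k+1)}$ is abelian so that $g\mapsto\rho(g)\bmod U^{(k+1)}$ is a homomorphism once $V\subseteq U^{(k)}$; and the passage from ``finite image of $\rho^{\mathrm{ss}}|_{\pi_1(X_{\overline{\bF}_q})}$'' to ``trivial after a covering'' needs an open normal subgroup of $\rho^{\mathrm{ss}}(\pi_1(X))$ meeting that finite image trivially, which exists since compact $p$-adic analytic groups have torsion-free open normal subgroups. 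Neither affects the validity of the proof.
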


\begin{proof}
Take an indecomposable $\overline{\bQ}_p$-representation $\rho$ of $\pi_1(X)$ as in the statement of Proposition~\ref{fund}. 
By the assumption, any irreducible constitute becomes trivial as a representation of $\pi_1(X)$ after a finite covering of $X$, and we may assume all irreducible constitutes are trivial. 

We will show that the action of $\pi_1(X)$ on such a $\rho$ factors through $G_{\bF_q}$. By induction, it suffices to show this statement when $\dim\rho=2$. 
Then, $\rho$ is an extension of the trivial character by itself, and is classified by $H^1(\pi_1(X), \overline{\bQ}_p)=\textnormal{Hom}(\pi_1(X), \overline{\bQ}_p)$.  By the class field theory, any element in $\textnormal{Hom}(\pi_1(X), \overline{\bQ}_p)$ factors through $G_{\bF_q}$. This means it comes from $H^1(G_{\bF_q},\overline{\bQ}_p)$ and the corresponding extension comes from a representation of $G_{\bF_q}$.  
\end{proof}

It suffices to show Proposition~\ref{fund} for an irreducible absolutely unit-root $\overline{\bQ}_p$-representation $\rho$. 
As in Section~\ref{pre}, $\rho$ corresponds to an absolutely unit-root overconvergent $F$-isocrystal $M$ with $\overline{\bQ}_p$-structure. 
Since this correspondence is the equivalence of categories, irreducibility is preserved. 
In fact, $M$ is irreducible in the category $F\textnormal{-Overconv}(X/\overline{\bQ}_p)$ of (not necessary unit-root) overconvergent $F$-isocrystals with $\overline{\bQ}_p$-structure as unit-root condition is closed under subquotients. 

The first key fact, which is a $p$-adic version of Delinge's conjecture~\cite{Deligne:WeilII}*{Conjecture 1.2.10 (ii)}, is a consequence of the $p$-adic version of the Langlands correspondence established by T. Abe~\cite{Abe} following Drinfeld-Lafforgue's strategy. 

\begin{thm}[\cite{Abe}]\label{finite}
For any irreducible object of $F\textnormal{-Overconv}(X/\overline{\bQ}_p)$ with finite determinant, there exists a finite extension $E$ of $\bQ$ inside $\overline{\bQ}_p$ such that, for any closed point $x\in X$, the characteristic polynomial of Frobenius at $x$ has coefficients in $E$. 
\end{thm}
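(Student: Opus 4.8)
The plan is to deduce the statement from Abe's $p$-adic Langlands correspondence \cite{Abe}, to which the theorem is attributed, via the finiteness of spaces of automorphic forms. Write $F=\bF_q(X)$ for the function field and let $r$ be the rank of the given irreducible object $M$. First I would invoke the correspondence to attach to $M$ a cuspidal automorphic representation $\pi$ of $GL_r(\mathbf A_F)$, matching local factors: for every closed point $x\in X$ the characteristic polynomial of geometric Frobenius at $x$ acting on $M$ agrees, after the standard normalisation recalled in Section~\ref{pre}, with the Hecke polynomial of the local component $\pi_x$. The finiteness of $\det M$ translates, by reciprocity for $GL_1$, into a central character of $\pi$ of finite order, and the (tame) ramification of $M$ along the boundary $\overline X\setminus X$ pins down a fixed level for $\pi$.

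The rationality over a single number field is then a formal consequence of finite-dimensionality. Having fixed the level and the finite-order central character, the corresponding space $V$ of cusp forms is finite-dimensional and carries a natural $\bQ$-structure, and the Hecke operators $T_x^{(j)}$ (for $x$ ranging over closed points of $X$ and $1\le j\le r$) act on $V$ as a commuting family of $\bQ$-rational endomorphisms. They therefore generate a commutative subalgebra $\mathbb T\subset\operatorname{End}_{\bQ}(V)$ that is finite-dimensional over $\bQ$. The eigensystem $\pi$ is a $\bQ$-algebra homomorphism $\lambda\colon\mathbb T\to\overline{\bQ}_p$, whose image $E:=\lambda(\mathbb T)$ is a finite extension of $\bQ$. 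Since the coefficients of the Frobenius characteristic polynomial at $x$ are, up to sign, the Hecke eigenvalues $\lambda(T_x^{(j)})$, they all lie in this single field $E$, which is exactly the assertion.

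The hard part is, of course, the first step: producing $\pi$ from $M$ and establishing the local compatibility at every $x$. This is the entire substance of Abe's work, which I would cite rather than reprove; its engine, following Drinfeld and L.~Lafforgue in the $\ell$-adic case, is the cohomology of moduli of shtukas, now realised with arithmetic $\mathscr D$-module coefficients, and it relies on Abe's six-functor formalism for overholonomic $F$-isocrystals with $\overline{\bQ}_p$-coefficients together with a $p$-adic Grothendieck--Lefschetz/product-formula input. Once the correspondence and the pointwise compatibility are granted, the passage to a common number field is the elementary Hecke-algebra argument above; the only point needing a little care is that the finitely many places of $F$ on the boundary $\overline X\setminus X$, where $\pi$ may be ramified, are not closed points of $X$ and hence do not enter the claim.
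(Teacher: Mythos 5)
Your proposal is correct and follows essentially the same route as the paper, whose entire ``proof'' is the single remark that the statement follows from Abe's Langlands correspondence together with the corresponding finiteness for cuspidal automorphic representations, with a pointer to the proof of Th\'eor\`eme VII.6.(i) of Lafforgue. Your Hecke-algebra argument (finite-dimensional space of cusp forms of fixed level and finite-order central character, rational commuting Hecke operators, hence eigenvalues generating a number field) is precisely the content of that cited automorphic input, so you have simply made explicit what the paper leaves as a reference.
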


This follows from the Langlands correspondence and the corresponding statement for cuspidal automorphic representations, see also the proof of~\cite{Lafforgue}*{Th\'eor\`eme VII.6.(i)}.

Our $M$ may not satisfy the assumption of Theorem~\ref{finite}, but after twisting~\cite{Abe2011}*{6.1 Lemma.(ii)} we may assume that the determinant of $M$ is of finite order, and we can apply Theorem~\ref{finite}. 

The second ingredient is: 

\begin{prop}\label{root of unity}
Let $\rho$ be an irreducible object $\rho$ of $\textnormal{Rep}_{\overline{\bQ}_p}(\pi_1(X))$ with finite determinant. If $\rho$ is absolutely unit-root, any Frobenius eigenvalue is a root of unity. 
\end{prop}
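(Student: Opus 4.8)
The plan is to show that every Frobenius eigenvalue $\lambda$ has absolute value $1$ at every place of $\overline{\bQ}$; an algebraic number with this property is a root of unity by Kronecker's theorem, which is the conclusion. By Theorem~\ref{finite}, applied to the (irreducible, finite-determinant) overconvergent $F$-isocrystal corresponding to $\rho$, each eigenvalue $\lambda$ is an algebraic number, and $\prod_i\lambda_i=\det\rho(\mathrm{Frob}_x)$ is a root of unity since $\rho$ has finite determinant. It then remains to control $|\lambda|_w$ at the three kinds of places $w$ of $\overline{\bQ}$: those above $p$, those above a prime $\ell\neq p$, and the archimedean ones.

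For the places above $p$ I would read the conclusion off the hypothesis directly. A ring homomorphism $\iota\colon\overline{\bQ}_p\to\overline{\bQ}_p$ restricts to an automorphism of $\overline{\bQ}$, so as $\iota$ varies the elements $\iota(\lambda)$ run through all Galois conjugates of $\lambda$; composing with the fixed valuation of $\overline{\bQ}_p$ therefore realizes every place of $\overline{\bQ}$ above $p$. The absolutely unit-root hypothesis says precisely that each $\iota(\lambda)$ is a $p$-adic unit, i.e. $|\lambda|_w=1$ for every $w\mid p$.

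For a prime $\ell\neq p$ I would pass to the $\ell$-adic companion of $\rho$, a lisse $\overline{\bQ}_\ell$-sheaf with the same Frobenius characteristic polynomials, whose existence follows from the Langlands correspondence and the theory of companions (\cite{Lafforgue}, \cite{Abe}). Since $\pi_1(X)$ is profinite and the companion is continuous, its image lies in a compact subgroup of $GL_n(\overline{\bQ}_\ell)$, so the eigenvalues of the image of $\mathrm{Frob}_x$ are $\ell$-adic units; thus $|\lambda|_w=1$ for every $w\mid\ell$. For the archimedean places I would invoke purity: by \cite{Lafforgue} (equivalently, Abe's purity for the $p$-adic object) the irreducible $\rho$ is pure, so at each $x$ all eigenvalues $\lambda_i$ share a common archimedean absolute value; since $\det\rho$ is of finite order their product has absolute value $1$, forcing that common value, and hence $|\lambda|_w$ for every archimedean $w$, to equal $1$. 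Combining the three cases, $|\lambda|_w=1$ for all $w$, and Kronecker's theorem finishes the proof.

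The step at $p$ and the final Kronecker argument are elementary; the substance of the proposition lies in the other two inputs, and the main obstacle is that each type of place requires a different tool, with the non-$p$ finite places and the archimedean places both resting on the deep Langlands-theoretic results of Lafforgue and Abe (companions and purity). I would also emphasize that all three conditions are genuinely needed: a weight-$0$ Weil number such as $(2+i)/(2-i)$ has archimedean absolute value $1$ yet is not a root of unity, being a nontrivial unit at the primes above $5$, so neither the $p$-adic nor the $\ell$-adic unit condition can be dropped.
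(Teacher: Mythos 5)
Your proposal is correct and follows essentially the same route as the paper: establish that each eigenvalue is an algebraic number that is a unit at every place of $\overline{\bQ}$ --- above $p$ via the absolutely unit-root hypothesis, at finite places prime to $p$ and at archimedean places via the Langlands-theoretic results of Lafforgue and Abe --- and conclude by Kronecker's theorem. The only difference is cosmetic: where the paper cites \cite{Lafforgue}*{Th\'eor\`eme VII.6.(ii)-(iii)} as a black box, you unpack it (companions plus compactness for $\ell\neq p$, purity plus finite determinant for the archimedean places), which is a faithful account of what underlies that citation.
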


\begin{proof}
As before, we write $M$ for the corresponding overconvergent unit-root $F$-isocrystal. 

Given a Frobenius eigenvalue of $M$, take a finite extension $E'$ of $E$ containing it. 
By~\cite{Abe} and ~\cite{Lafforgue}*{Th\'eor\`eme VII.6.(ii)-(iii)}, any Frobenius eigenvalue has the complex absolute values $1$, and is a $v$-adic unit for any finite place $v$ of $E'$ prime to $p$.  
(Roughly speaking, Lafforgue constructed a pure motive corresponding to $\rho$. Given such a pure motive, these results on Frobenius eigenvalues follow from the theory of weights~\cite{Deligne:Weil II} or its $p$-adic counterpart (\cite{Kedlaya:Weil} and~\cite{Abe-Caro}).)

Since $M$ is absolutely unit-root, it is also a $v$-adic unit for any $v$ above $p$. 
Such a number must be a root of unity. 
\end{proof}

Finally, we explain how to combine Theorem~\ref{finite} and Proposition~\ref{root of unity} to prove Proposition~\ref{fund}. 
Since the degree of the characteristic polynomial of Frobenius is fixed and independent of $x$, Theorem~\ref{finite} and Proposition~\ref{root of unity} imply that there are only finitely many possibilities for the characteristic polynomial of Frobenius. 
Since the action is continuous, using the Chebotarev density, we see that there are also only finitely many possibilities for the characteristic polynomial of any element in $\pi_1(X)$. 
Therefore, again by continuity, there is an open normal subgroup $H$ of $\pi_1(X)$ acting unipotently.  
Since $\rho$ is irreducible and $H$ is normal, its restriction to $H$ is semisimple, hence trivial. 

\begin{rem}
If the number field $E$ is equal to $\bQ$, absolutely unit-root assumption is automatically satisfied.  
\end{rem}

\begin{exam}\label{counterexample}
For a general absolutely unit-root $\rho$ not satisfying the assumption of Proposition~\ref{fund}, Proposition~\ref{fund} is false. 
For example, let $T_p=\varprojlim J(\overline{\bF_q})[p^n]$ be the maximal \'etale quotient of $p$-adic Tate module of the Jacobian $J$ of $X$. It is a representation of $G_{\bF_q}$ and known to be semisimple after inverting $p$. Via the surjection $\pi_1(X)\to G_{\bF_q}$, we regard it as a representation of $\pi_1(X)$, which is absolutely unit-root. 
Assuming $J$ is not supersingular, we will make a counterexample as an extension of the trivial character by $T_p\otimes\bQ$, i.e., its geometric monodromy is not finite. 

Via the Kummer sequence, $H^1(G_{\bF_q}, T_p)$ is isomorphic to the $p$-primary part of $J(\bF_q)$. 

On the other hand, we have the following map
\[
J(X)=\textnormal{Mor}_{\bF_q}(X, J)\to H^1_{\textnormal{flat}}(X, J[p^n])\to H^1(\pi_1(X), J(\overline{\bF}_q)[p^n]). 
\]
By taking the limit, we obtain $J(X)\to H^1(\pi_1(X), T_p)$. 

Suppose $q$ is large enough so that $X$ has an $\bF_q$-rational point $x_0$. 
It induces a nonconstant map $f\colon X\to J$ sending $x_0$ to $0$.  
We prove that the image of $f$ in $H^1(\pi_1(X), T_p\otimes\bQ)$ is nonzero and gives a desired counterexample. 

Since the action of $\pi_1(X_{\overline{\bF}_q})$ is trivial on $T_p$, 
\[
H^1(\pi_1(X_{\overline{\bF}_q}), T_p)=\textnormal{Hom}(\pi_1(X_{\overline{\bF}_q})^{\textnormal{ab}}, T_p)=\textnormal{Hom}(T_p, T_p). 
\]
Then the image of $f$ in $\textnormal{Hom}(T_p, T_p)$ can be identified with the identity. 
This implies the desired claim. 
\end{exam}

\section{The proof of Theorem~\ref{isotrivial}}\label{proof of isotrivial}
As explained in Section~\ref{pre}, let $\rho$ be a potentially unramified $\overline{\bQ}_p$-representation of $\pi_1(X)$ corresponding to $M$.  
We will prove the corresponding statement for $\rho$.  

Note that we can shrink $X$ if necessary. 
Then, by de Jong's alteration~\cite{deJong:alteration}*{4.1 Theorem}, we can take a connected finite \'etale covering $Y$ of $X$ with an open dense immersion into a projective smooth variety $\overline{Y}$. 
Since $\rho$ is potentially unramified~\cite{Kedlaya:unit-root}*{Theorem 2.3.7}, after passing to another connected finite \'etale covering if necessary, $\rho$ factors through $\pi_1(\overline{Y})$. 

First, we treat the case $M$ is semisimple. So, $\rho$ is also semisimple. As semisimplicity is preserved by finite \'etale coverings, $\rho$ is also semisimple as a representation of $\pi_1(\overline{Y})$. 
By the Bertini theorem and the Lefschetz-type theorem for $\pi_1$~\cite{SGA1}*{Expos\'e X, Lemma 2.10}, after a finite base extension\footnote{In fact, this is not necessarily by ~\cite{Poonen}*{Theorem 1.1}.}, one can find a geometrically connected projective smooth curve $C$ inside $\overline{Y}$ inducing a surjection $\pi_1(C)\to\pi_1(\overline{Y})$. 
So, the isotriviality of $M$ follows from the curve case, i.e., Proposition~\ref{fund}. 

Next, assume $M$ is irreducible and its determinant is of finite order. 
Then, for some connected finite Galois covering $Z$ of $X$,  $\pi_1(Z_{\overline{\bF}_q})$ acts trivially on $\rho$. In particular, $\rho$ is the direct sum of characters of $\pi_1(Z)$. 
Let $\chi$ be such a character of $\pi_1(Z)$. Note that $\chi$ is also trivial on $\pi_1(Z_{\overline{\bF}_q})$. 
By a usual Clifford description of the restriction of an irreducible representation to a normal subgroup of finite index, such characters are related to each other by conjugation of elements of $\pi_1(X)$. 
Namely, any such character has the form of $g\mapsto \chi(hgh^{-1})$ with $g\in \pi_1(Z)$ and $h \in \pi_1(X)$. 
In fact, as $\chi$ is trivial on $\pi_1(Z_{\overline{\bF}_q})$, this implies that the all these characters are equal to $\chi$. 
Since the product of all these characters is of finite order by the assumption, $\chi$ is finite. 
Therefore, the action of $\pi_1(X)$ on $\rho$ factors through a finite quotient. 

Finally, assume that the determinant of any irreducible constitute of $M$ is of finite order. 
We may assume that any irreducible constitute is trivial. 
As in the beginning of the proof, we can find a finite \'etale covering $Y$ of $X$ with a smooth compactification $\overline{Y}$ such that $\rho$ becomes a representation of $\pi_1(\overline{Y})$. So, we can assume $X$ is a geometrically connected projective smooth variety. 
Then, by~\cite{Katz-Lang}*{Theorem 2}, the kernel of $\pi_1(X)^{\textnormal{ab}}\to G_{\bF_q}^{\textnormal{ab}}$ is finite. 
Using this finiteness, one can see that the action of $\pi_1(X)$ on $\rho$ factors through $G_{\bF_q}$ as in the proof of Lemma~\ref{extension}. 

\begin{rem}\label{l-adic}
Let $\ell$ be a prime different from $p$ and $\mathcal{F}$ an irreducible $\overline{\bQ}_{\ell}$-local system on a (connected) normal scheme of finite type over $\bF_q$ whose determinant is of finite order. 
Then, any Frobenius eigenvalue is an algebraic number (\cite{Lafforgue}*{Th\'eor\`em VII.7.(i)} and~\cite{Deligne:finite}*{1.5-9}). 

Our proof of the theorem can be modified to show that, if all the Frobenius eigenvalues of $\mathcal{F}$ are $p$-adic units, then $\mathcal{F}$ becomes trivial after passing to a finite \'etale covering. 
In the smooth curve case, the same argument works even when it is not proper. 
The reduction to the curve case is done by~\cite{Deligne:finite}*{1.7-8}. (See also~\cite{Esnault-Kerz}*{Appendix B}.)

This also shows the existence of crystalline companions~\cite{Abe} in this particular case. 
Note that it is not known in full generality if $\dim X>1$, though one expects a $p$-adic version of~\cite{Drinfeld} if $X$ is smooth.
\end{rem}

\begin{bibdiv}
\begin{biblist}

\bib{SGA1}{collection}{
   label={SGA1}
   title={Rev\^etements \'etales et groupe fondamental (SGA 1)},
   language={French},
   series={Documents Math\'ematiques (Paris) [Mathematical Documents
   (Paris)], 3},
   note={S\'eminaire de g\'eom\'etrie alg\'ebrique du Bois Marie 1960--61.
   [Algebraic Geometry Seminar of Bois Marie 1960-61];
   Directed by A. Grothendieck;
   With two papers by M. Raynaud;
   Updated and annotated reprint of the 1971 original [Lecture Notes in
   Math., 224, Springer, Berlin;  MR0354651 (50 \#7129)]},
   publisher={Soci\'et\'e Math\'ematique de France, Paris},
   date={2003},
   pages={xviii+327},
   isbn={2-85629-141-4},
   review={\MR{2017446 (2004g:14017)}},
}

\bib{Abe:cycle}{article}{
   author={Abe, Tomoyuki},
   title={Comparison between Swan conductors and characteristic cycles},
   journal={Compos. Math.},
   volume={146},
   date={2010},
   number={3},
   pages={638--682},
   issn={0010-437X},
   review={\MR{2644931 (2012b:14036)}},
   doi={10.1112/S0010437X09004485},
}

\bib{Abe2011}{article}{
 label={Abe11}, 
 author={Abe, Tomoyuki}, 
 title={Langlands program for p-adic coefficients and the petites camarades conjecture}, 
 eprint={http://arxiv.org/abs/1111.2479}
}

\bib{Abe}{article}{
 label={Abe13}, 
 author={Abe, Tomoyuki}, 
 title={Langlands correspondence for isocrystals and existence of crystalline companion for curves}, 
 eprint={http://arxiv.org/abs/1310.0528}
}

\bib{Abe-Caro}{article}{
 author={Abe, Tomoyuki}, 
 author={Caro, Daniel}, 
 title={Theory of weights in p-adic cohomology}, 
 eprint={http://arxiv.org/abs/1303.0662}
}

\bib{Crew}{article}{
   author={Crew, Richard},
   title={$F$-isocrystals and $p$-adic representations},
   conference={
      title={Algebraic geometry, Bowdoin, 1985},
      address={Brunswick, Maine},
      date={1985},
   },
   book={
      series={Proc. Sympos. Pure Math.},
      volume={46},
      publisher={Amer. Math. Soc., Providence, RI},
   },
   date={1987},
   pages={111--138},
   review={\MR{927977 (89c:14024)}},
   doi={10.1090/pspum/046.2/927977},
}

\bib{Deligne:WeilII}{article}{
   author={Deligne, Pierre},
   title={La conjecture de Weil. II},
   language={French},
   journal={Inst. Hautes \'Etudes Sci. Publ. Math.},
   number={52},
   date={1980},
   pages={137--252},
   issn={0073-8301},
   review={\MR{601520 (83c:14017)}},
}

\bib{Deligne:finite}{article}{
   author={Deligne, Pierre},
   title={Finitude de l'extension de $\Bbb Q$ engendr\'ee par des traces de
   Frobenius, en caract\'eristique finie},
   language={French, with French and Russian summaries},
   journal={Mosc. Math. J.},
   volume={12},
   date={2012},
   number={3},
   pages={497--514, 668},
   issn={1609-3321},
   review={\MR{3024820}},
}

\bib{Drinfeld}{article}{
   author={Drinfeld, Vladimir},
   title={On a conjecture of Deligne},
   language={English, with English and Russian summaries},
   journal={Mosc. Math. J.},
   volume={12},
   date={2012},
   number={3},
   pages={515--542, 668},
   issn={1609-3321},
   review={\MR{3024821}},
}

\bib{Esnault-Kerz}{article}{
   author={Esnault, H{\'e}l{\`e}ne},
   author={Kerz, Moritz},
   title={A finiteness theorem for Galois representations of function fields
   over finite fields (after Deligne)},
   journal={Acta Math. Vietnam.},
   volume={37},
   date={2012},
   number={4},
   pages={531--562},
   issn={0251-4184},
   review={\MR{3058662}},
}

\bib{Fontaine-Mazur}{article}{
   author={Fontaine, Jean-Marc},
   author={Mazur, Barry},
   title={Geometric Galois representations},
   conference={
      title={Elliptic curves, modular forms, \& Fermat's last theorem (Hong
      Kong, 1993)},
   },
   book={
      series={Ser. Number Theory, I},
      publisher={Int. Press, Cambridge, MA},
   },
   date={1995},
   pages={41--78},
   review={\MR{1363495 (96h:11049)}},
}

\bib{Goren}{article}{
   author={Goren, Eyal Z.},
   title={Hasse invariants for Hilbert modular varieties},
   journal={Israel J. Math.},
   volume={122},
   date={2001},
   pages={157--174},
   issn={0021-2172},
   review={\MR{1826497}},
   doi={10.1007/BF02809897},
}

\bib{deJong:alteration}{article}{
   author={de Jong, A. J.},
   title={Smoothness, semi-stability and alterations},
   journal={Inst. Hautes \'Etudes Sci. Publ. Math.},
   number={83},
   date={1996},
   pages={51--93},
   issn={0073-8301},
   review={\MR{1423020 (98e:14011)}},
}

\bib{deJong}{article}{
   author={de Jong, A. J.},
   title={A conjecture on arithmetic fundamental groups},
   journal={Israel J. Math.},
   volume={121},
   date={2001},
   pages={61--84},
   issn={0021-2172},
   review={\MR{1818381 (2003a:11073)}},
   doi={10.1007/BF02802496},
}

\bib{Katz}{article}{
   author={Katz, Nicholas M.},
   title={$p$-adic properties of modular schemes and modular forms},
   conference={
      title={Modular functions of one variable, III},
      address={Proc. Internat. Summer School, Univ. Antwerp, Antwerp},
      date={1972},
   },
   book={
      publisher={Springer, Berlin},
   },
   date={1973},
   pages={69--190. Lecture Notes in Mathematics, Vol. 350},
   review={\MR{0447119 (56 \#5434)}},
}

\bib{Katz-Lang}{article}{
   author={Katz, Nicholas M.},
   author={Lang, Serge},
   title={Finiteness theorems in geometric classfield theory},
   note={With an appendix by Kenneth A. Ribet},
   journal={Enseign. Math. (2)},
   volume={27},
   date={1981},
   number={3-4},
   pages={285--319 (1982)},
   issn={0013-8584},
   review={\MR{659153 (83k:14012)}},
}

\bib{Kedlaya:Weil}{article}{
   author={Kedlaya, Kiran S.},
   title={Fourier transforms and $p$-adic `Weil II'},
   journal={Compos. Math.},
   volume={142},
   date={2006},
   number={6},
   pages={1426--1450},
   issn={0010-437X},
   review={\MR{2278753 (2008b:14024)}},
   doi={10.1112/S0010437X06002338},
}

\bib{Kedlaya:semistableII}{article}{
   author={Kedlaya, Kiran S.},
   title={Semistable reduction for overconvergent $F$-isocrystals. II. A
   valuation-theoretic approach},
   journal={Compos. Math.},
   volume={144},
   date={2008},
   number={3},
   pages={657--672},
   issn={0010-437X},
   review={\MR{2422343 (2009e:14032)}},
   doi={10.1112/S0010437X07003296},
}

\bib{Kedlaya:unit-root}{article}{
   author={Kedlaya, Kiran S.},
   title={Swan conductors for $p$-adic differential modules. II: global
   variation},
   journal={J. Inst. Math. Jussieu},
   volume={10},
   date={2011},
   number={1},
   pages={191--224},
   issn={1474-7480},
   review={\MR{2749575 (2012d:11231)}},
   doi={10.1017/S1474748010000137},
}

\bib{Kisin-Wortmann}{article}{
   author={Kisin, Mark},
   author={Wortmann, Sigrid},
   title={A note on Artin motives},
   journal={Math. Res. Lett.},
   volume={10},
   date={2003},
   number={2-3},
   pages={375--389},
   issn={1073-2780},
   review={\MR{1981910 (2004d:14018)}},
   doi={10.4310/MRL.2003.v10.n3.a7},
}

\bib{Lafforgue}{article}{
   author={Lafforgue, Laurent},
   title={Chtoucas de Drinfeld et correspondance de Langlands},
   language={French, with English and French summaries},
   journal={Invent. Math.},
   volume={147},
   date={2002},
   number={1},
   pages={1--241},
   issn={0020-9910},
   review={\MR{1875184 (2002m:11039)}},
   doi={10.1007/s002220100174},
}

\bib{Poonen}{article}{
   author={Poonen, Bjorn},
   title={Bertini theorems over finite fields},
   journal={Ann. of Math. (2)},
   volume={160},
   date={2004},
   number={3},
   pages={1099--1127},
   issn={0003-486X},
   review={\MR{2144974 (2006a:14035)}},
   doi={10.4007/annals.2004.160.1099},
}

\bib{Shiho}{article}{
   author={Shiho, Atsushi},
   title={Purity for overconvergence},
   journal={Selecta Math. (N.S.)},
   volume={17},
   date={2011},
   number={4},
   pages={833--854},
   issn={1022-1824},
   review={\MR{2861608 (2012k:14026)}},
   doi={10.1007/s00029-011-0066-y},
}
		
\bib{Tsuzuki:curve}{article}{
   author={Tsuzuki, Nobuo},
   title={Finite local monodromy of overconvergent unit-root $F$-isocrystals
   on a curve},
   journal={Amer. J. Math.},
   volume={120},
   date={1998},
   number={6},
   pages={1165--1190},
   issn={0002-9327},
   review={\MR{1657158 (99k:14038)}},
}
\bib{Tsuzuki:hom}{article}{
   author={Tsuzuki, Nobuo},
   title={Morphisms of $F$-isocrystals and the finite monodromy theorem for
   unit-root $F$-isocrystals},
   journal={Duke Math. J.},
   volume={111},
   date={2002},
   number={3},
   pages={385--418},
   issn={0012-7094},
   review={\MR{1885826 (2003b:14026)}},
   doi={10.1215/S0012-7094-02-11131-4},
}

\end{biblist}
\end{bibdiv}

\end{document}